\documentclass[psamsfonts]{amsart}

\usepackage{amssymb,amsfonts}
\usepackage[all,arc]{xy}
\usepackage{enumerate}
\usepackage{mathrsfs}
\usepackage{graphicx}

\newtheorem{thm}{Theorem}[section]

\newtheorem{prop}[thm]{Proposition}
\newtheorem{lem}[thm]{Lemma}

\theoremstyle{definition}

\theoremstyle{remark}
\newtheorem{rem}[thm]{Remark}

\newcommand{\op}{\mathop{\mathrm{op}}}

\newcommand{\B}{\mathop{\mathcal{B}_0}}
\newcommand{\Z}{\mathop{\mathbb{Z}}}

\makeatletter
\let\c@equation\c@thm
\makeatother
\numberwithin{equation}{section}

\bibliographystyle{plain}

\title{A Generating Function for the Distribution of Runs in Binary Words }

\author{James J. Madden}

\date{July 10, 2017}

\begin{document}

\begin{abstract}
Let $N(n,r,k)$ denote the number of binary words of length $n$ that begin with $0$ and contain exactly $k$ runs (i.e., maximal subwords of identical consecutive symbols)  of length $r$.  We show that the generating function for the sequence $N(n,r,0)$, $n=0,1,\ldots$, is $(1-x)(1-2x + x^r-x^{r+1})^{-1}$ and that the generating function for  $\{N(n,r,k)\}$ is $x^{kr}$ time the $k+1$ power of this.   We extend to counts of words containing exactly $k$ runs of $1$s by using symmetries on the set of binary words.

\medskip
\noindent \textsc{Key words.} runs, maximal runs, distribution of runs, generating function, Binary word, Bernouilli trials

\medskip
\noindent  AMS Subject Classification:  Primary 05A15.  Secondary 60C05

\end{abstract}

\maketitle

\section{Statement of the main results}

By a maximal run in a binary word, we mean a maximal consecutive sub-sequence of identical symbols, cf.\ \cite{feller}, page 42.  We will use the words ``run'' and ``maximal run'' synonymously.  A run at the beginning of a word is an initial sequence of identical symbols that is followed immediately by a different symbol, and a run at the end is defined analogously.    If we cut a word between every pair of consecutive symbols that differ, the subwords that result are the runs in that word.  

The purpose of this note is to present generating functions that count the number of binary words of length $n$ ($n=0,1,2,\ldots$) that have a specified number of runs of a specified length.  Our main result is as follows:

\begin{thm} Let $N(n,r,k)$ be the number of binary words of length $n$ that begin with 0 and contain exactly $k$ maximal runs of length $r$. Then 
$$\sum_{n=0}^{\infty}N(n,r,k)x^n=x^{k\,r}\left(\frac{1 - x}{1 - 2 x + x^r-x^{r+1}}\right)^{k+1}.$$\end{thm}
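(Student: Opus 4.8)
The plan is to replace ``words'' by ``compositions'', read off a bivariate generating function by the symbolic method, and then extract the coefficient of the variable marking runs of length $r$. First I would establish a bijection. Cutting a binary word at every boundary between two unequal symbols writes it as the concatenation of its maximal runs; if the word begins with $0$, then the symbols carried by the successive runs are forced to be $0,1,0,1,\dots$. Consequently a word of length $n$ beginning with $0$ is determined by, and conversely determines, the finite sequence $(\ell_1,\dots,\ell_m)$ of its run-lengths, which may be an arbitrary composition of $n$ (the empty composition corresponding to the empty word in the case $n=0$). Under this bijection a maximal run of length $r$ in the word is precisely a part equal to $r$ in the composition, with the runs at the two ends of the word being no exception. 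Hence $N(n,r,k)$ is the number of compositions of $n$ having exactly $k$ parts equal to $r$.

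Next I would form the bivariate generating function $F(x,y)=\sum_{n,k\ge 0}N(n,r,k)\,x^n y^k$, which by the previous paragraph equals $\sum_{\lambda}x^{|\lambda|}y^{m_r(\lambda)}$, the sum ranging over all compositions $\lambda$, where $|\lambda|$ is the size and $m_r(\lambda)$ the number of parts equal to $r$. A composition is a finite sequence of parts, and a single part of size $\ell$ contributes weight $x^{\ell}$ when $\ell\ne r$ and weight $y\,x^{r}$ when $\ell=r$; the total weight of one part is therefore $\sum_{\ell\ge 1}x^{\ell}+(y-1)x^{r}=\frac{x}{1-x}+(y-1)x^{r}$, and summing the geometric series over the number of parts gives
$$F(x,y)=\frac{1}{1-\frac{x}{1-x}-(y-1)x^{r}}=\frac{1-x}{1-2x-(y-1)(x^{r}-x^{r+1})}.$$

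Finally I would extract the coefficient of $y^{k}$. Writing the denominator as $D-yg$ with $D=1-2x+x^{r}-x^{r+1}$ and $g=x^{r}-x^{r+1}=x^{r}(1-x)$, and expanding $\frac{1-x}{D-yg}$ as $\frac{1-x}{D}\sum_{k\ge 0}(g/D)^{k}y^{k}$, I obtain
$$\sum_{n=0}^{\infty}N(n,r,k)\,x^{n}=[y^{k}]\,F(x,y)=\frac{(1-x)\,g^{k}}{D^{k+1}}=x^{kr}\left(\frac{1-x}{1-2x+x^{r}-x^{r+1}}\right)^{k+1},$$
which is the claimed identity. The same factorization can also be read off combinatorially: a composition with exactly $k$ parts equal to $r$ is a composition with no part equal to $r$, followed by $k$ blocks each of the form ``a part equal to $r$, then a composition with no part equal to $r$'', and the generating function for compositions with no part equal to $r$ is exactly $(1-x)/(1-2x+x^{r}-x^{r+1})$, the $k=0$ case.

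The step that needs real care is the first one. One has to check that, once the leading symbol is fixed, the sequence of run-lengths can be any finite sequence of positive integers whatsoever (all adjacency restrictions vanish), that the empty word is accounted for correctly, and that ``maximal run of length $r$'' corresponds to ``part equal to $r$'' with no exceptional behaviour for the runs sitting at the ends of the word. Granting this, the remainder is routine manipulation of rational generating functions.
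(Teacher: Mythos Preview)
Your argument is correct.  The bijection between binary words of length $n$ beginning with $0$ and compositions of $n$ is exactly right, the bivariate generating function is computed correctly, and the extraction of the coefficient of $y^{k}$ is clean.  The closing remark --- factoring a composition with exactly $k$ parts equal to $r$ as an $r$-free composition followed by $k$ blocks of the form ``(part $=r$)(an $r$-free composition)'' --- is also correct and in fact recovers the paper's decomposition in the language of compositions.

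Your route, however, is genuinely different from the paper's.  The paper never passes to compositions and never writes down a bivariate generating function.  Instead it proceeds in two stages.  For $k=0$ it defines $W_r(x)=(1-x)/(1-2x+x^{r}-x^{r+1})$, reads off the linear recurrence $W(n,r)=2W(n-1,r)-W(n-r,r)+W(n-r-1,r)$ satisfied by its coefficients, and then verifies by a combinatorial case analysis (appending a symbol to a word of length $n-1$, tracking when this creates or destroys a run of length $r$ at the end) that $N(n,r,0)$ obeys the same recurrence and initial conditions.  For $k>0$ it decomposes a word $w\in\mathcal{B}_0(n,r,k)$ as $w_0r_1w_1\cdots r_kw_k$ with $r_i$ the runs of length $r$ and $w_i$ $r$-run-free, argues that the choices of the $w_i$ (each from some $\mathcal{B}_0(n_i,r,0)$) determine the word uniquely once one alternates symbols appropriately using the $\op$ operator, and concludes that $N(n,r,k)$ is the convolution $\sum \prod N(n_i,r,0)$ over $n_0+\cdots+n_k=n-kr$, i.e., the coefficient of $x^{n-kr}$ in $W_r(x)^{k+1}$.

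What your approach buys is brevity and uniformity: the symbolic method handles all $k$ at once, and the $k=0$ formula falls out as a special case rather than requiring a separate recurrence-matching argument.  What the paper's approach buys is that it is entirely self-contained and elementary --- no appeal to the sequence construction or to bivariate machinery --- and the recurrence it establishes for $N(n,r,0)$ has independent computational value.  Your final combinatorial remark is essentially the paper's Section~4 argument, so the two proofs converge there; the real divergence is in how the $k=0$ generating function is obtained.
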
 

This result is quite elementary, but we have not been able to find it in any other source.  The sequences with $r=1$ and $k<5$, and some sequences with $r=2$ appear in the \textit{On-Line Encyclopedia of Integer Sequences}, but the close relationships between them is not made clear in the comments there.  

The proof of the theorem is given in sections 3 and 4.  We count sequences that begin with $0$ because this results in significant technical simplifications.  Obviously, we can deduce the count of all sequences with a specified number of runs of a specified length using symmetry. 

Some researchers are more interested in counting \textit{success runs} than in counting all runs.  By a success run, we mean a maximal consecutive sub-sequence of $1$s.  Let $M(n,r,k)$ be the number of binary words of length $n$ that begin with 0 and contain exactly $k$ maximal runs of $1$s of length $r$.

\begin{prop}  $M(n,r,k) = N(n, r+1,k)$ for all $n$, $r$ and $k$.\end{prop}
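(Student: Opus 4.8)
The plan is to fix $r$, introduce a variable $u$ marking the number of runs being counted, and prove the single bivariate identity
$$\sum_{n,k\ge 0} M(n,r,k)\,x^n u^k \;=\; \sum_{n,k\ge 0} N(n,r+1,k)\,x^n u^k ,$$
from which the proposition follows by comparing coefficients of $x^n u^k$. One might hope instead for a length‑preserving bijection on words beginning with $0$ carrying ``maximal $1$-run of length $r$'' to ``maximal run of length $r+1$'', but such a map cannot fix the word $0^{r+1}$ (which has no $1$-runs at all yet is itself a run of length $r+1$), so it is necessarily non‑local; the obvious candidates --- grow each $1$-run one step into the $0$ preceding it --- also collapse runs when a $0$-run has length $1$. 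The generating‑function computation sidesteps these issues.

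For the right‑hand side I would sum Theorem 1.1 over $k$ as a geometric series: writing $P=\dfrac{1-x}{1-2x+x^{r+1}-x^{r+2}}$, one gets $\sum_{n,k} N(n,r+1,k)x^nu^k=\sum_{k\ge 0} (ux^{r+1})^k P^{k+1}=\dfrac{P}{1-ux^{r+1}P}$, which simplifies to
$$\sum_{n,k\ge 0} N(n,r+1,k)\,x^n u^k \;=\; \frac{1-x}{\,1-2x+x^{r+1}-x^{r+2}-u\,x^{r+1}(1-x)\,}.$$

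For the left‑hand side I would use the structural decomposition of words beginning with $0$. Such a word factors uniquely as
$$w \;=\; 0^{c_1}1^{d_1}\,0^{c_2}1^{d_2}\cdots 0^{c_t}1^{d_t}\,0^{e},\qquad t\ge 0,\ c_i,d_i\ge 1,\ e\ge 0 ,$$
and the exponents $d_1,\dots,d_t$ are precisely the lengths of the maximal runs of $1$s of $w$, because each block $1^{d_i}$ is flanked by $0$s or by an end of $w$. Marking length by $x$ and each index $i$ with $d_i=r$ by $u$, a factor $0^{c_i}1^{d_i}$ has generating function $\frac{x}{1-x}\bigl(\frac{x}{1-x}-x^{r}+u\,x^{r}\bigr)$, the tail $0^{e}$ has generating function $\frac{1}{1-x}$, and summing over $t\ge 0$ gives
$$\sum_{n,k\ge 0} M(n,r,k)\,x^n u^k \;=\; \frac{1}{1-x}\cdot\frac{1}{\,1-\frac{x}{1-x}\bigl(\frac{x}{1-x}-x^{r}+u\,x^{r}\bigr)\,} .$$
Clearing denominators, and using $(1-x)^2-x^2=1-2x$, reduces this last expression to exactly the one displayed for $\sum N(n,r+1,k)x^nu^k$, which finishes the proof.

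The whole argument is elementary; the step needing genuine care is the first assertion of the preceding paragraph --- that the displayed factorization of a word beginning with $0$ is unique and that its blocks $1^{d_i}$ exhaust, with no repetition, the maximal runs of $1$s. This forces one to check the degenerate cases: an all‑$0$ word has $t=0$, a word ending in $1$ has $e=0$, and the empty word has $t=e=0$ and supplies the constant term $1$ (consistently with $N(0,r+1,0)=1$). Everything after that is the one‑line simplification, whose only real demand is that it reproduce the denominator $1-2x+x^{r+1}-x^{r+2}-u\,x^{r+1}(1-x)$ on the nose.
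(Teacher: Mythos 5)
Your argument is correct: the bivariate series for $\sum N(n,r+1,k)x^nu^k$ obtained by summing Theorem 1.1 over $k$ is $\dfrac{1-x}{1-2x+x^{r+1}-x^{r+2}-ux^{r+1}(1-x)}$, the decomposition $0^{c_1}1^{d_1}\cdots 0^{c_t}1^{d_t}0^{e}$ of a word beginning with $0$ is indeed unique with the blocks $1^{d_i}$ exactly the maximal $1$-runs, and the resulting product $\frac{1}{1-x}\cdot\bigl(1-\frac{x}{1-x}(\frac{x}{1-x}-x^r+ux^r)\bigr)^{-1}$ simplifies via $(1-x)^2-x^2=1-2x$ to the same rational function. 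However, this is a genuinely different route from the paper, and your preliminary claim that a clean length-preserving bijection is unavailable is too pessimistic: the paper's proof is exactly such a bijection. It defines $\gamma$ on words beginning with $0$ by writing each maximal run $r_i$ (of either symbol) as $01\cdots1$ of the same length; the inverse is recovered by cutting at the $0$s and alternating symbols, and $\gamma$ carries a run of length $r+1$ to a maximal $1$-run of length $r$. (It does not fix $0^{r+1}$ --- it sends it to $01^{r}$ --- but nothing requires it to.) The paper's $\gamma$ is independent of Theorem 1.1 and in fact transports the entire multiset of run lengths to the multiset of $1$-run lengths shifted down by one, so it proves more than the stated identity; your generating-function proof, by contrast, is logically downstream of Theorem 1.1 but yields the closed bivariate generating function for $M(n,r,k)$ as a useful byproduct, and the transfer-style decomposition of the left-hand side gives an independent check on the theorem itself. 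Both are complete proofs; yours trades the combinatorial refinement for an explicit formula.
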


The proof is given in section 5.

For a binary word $b$ of length $n$, define $K_n^r(b)$ to be the number runs of length exactly $r$ in $b$.  If we view $K_n^r$ as a random variable for a binomial experiment with parameters $(n,1/2)$, then $P(K_n^r = k) = N(n,r,k)/2^{n-1}$.  Our results permit the rapid computation of the exact distribution of $K_n^r$ for $n$ up to a thousand and any $r$ less than $n$.  We include some graphs and computing times in section $6$.

Now, let $M_n^{(r)}(b)$ be the number of success runs in $b$ that have length at least $r$.  Museli \cite{mus} has given an elementary formula for the probability  $P(M_n^{(r)}=m)$, where $M_n^{(r)}$ is viewed as a random variable on the sample space of an $(n,p)$ binomial  experiment.  The proposition shows how Museli's results are related to ours.  

Sinha \& Sinha \cite{sinha} used a generating function to attempt to derive a formula for $M(n,r,k)$, which in their notation is $N_n^{k,r}$. However, the generating function that they use is different from ours and is used to count different objects, and they require additional combinatorial arguments to get the formula they seek.  We have checked the formula they give against known counts, and we do not find agreement, so we have either misunderstood their notation or there is an error in their formula.

\section{Preliminaries}

Let $\B(n,r,k)$ denote the set of binary words $b$ that satisfy the following conditions:
\begin{itemize}
\item $b$ is of length $n$;
\item if $n>0$, $b$ begins with $0$;
\item $b$ contains exactly $k$ maximal runs of length $r$.
\end{itemize} 
For small $n$, we have:
\begin{align*}
\B(n,r,k)&=\emptyset,\;\hbox{if $n<0$, since there are no words of negative length;}\\
\B(0,r,k) &=\{\ast\},\;\hbox{where $\ast$ denotes the empty word;}\\
\B(1,1,0)&=\emptyset\;\hbox{and $\B(1,1,1) = \{0\}$};\\
\B(1,r,0)&=\{0\},\;\hbox{if $r=2,3,\ldots$.}
\end{align*}
For example:
\begin{align*}
\B(6,1,0)&=\{ 000000, 000011, 000111, 001100, 001111\};\\
\B(6,2,2)&=\{001101, 001001, 001011, 011001, 011011, 010011\}.
\end{align*}

\section{Proof of theorem in case $k=0$}

We define the numbers $W(n,r)$, $n\in \Z$, $r=1,2,\ldots$, as the coefficients of the power series expansion of the rational function in the equation below.  We write $W_r(x)$ for the power series. 

\begin{equation}\label{eq1}W_r(x): = \sum_{n=0}^{\infty}W(n,r)x^n=\frac{1 - x}{1 - 2 x + x^r-x^{r+1}}.\end{equation}
For small $n$, we have:
\begin{align*}
W(n,r) &= 0,\;\mbox{if } n < 0;\\
W(0,r) &= 1;\\
W(1,1) &= 0;\\ 
W(1,r) &=  1,\;\hbox{if $r=2,3,\ldots$}.
\end{align*}
Moreover, for all $r=1,2,\ldots$, $W(n,r)$ is defined by the following recursion:
$$W(n,r) = 2\,W(n - 1,r) - W(n - r,r) + W(n - r - 1,r).$$

\begin{rem} $W(n,1)$ is the Fibonacci sequence with a 1 prepended, which is A212804 in the \textit{On-Line Encyclopedia of Integer Sequences}.  $W(n,2)$ is an offset of A005251;  $W(n,3)$ is an offset of A049856; $W(n,4)$ is an offset of A108758.  The sequences $W(n, r)$ with $r\geq5$  do not appear in the \textit{OEIS} at the present time.\end{rem}

\begin{lem} $W(n,r) = N(n,r,0)$ for all integers $n$ and for all positive integers $r$.\end{lem}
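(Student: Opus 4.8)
The plan is to compute the generating function $\sum_{n\ge 0}N(n,r,0)\,x^n$ directly, by decomposing each word into its maximal runs, and then to observe that the result is exactly $W_r(x)$.

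First I would set up the run decomposition. A nonempty binary word $b$ that begins with $0$ is completely determined by the sequence $(\ell_1,\ell_2,\dots,\ell_m)$ of lengths of its successive maximal runs: the runs must alternate $0,1,0,1,\dots$, so once $m$ is fixed and we require $b$ to begin with $0$, every symbol is forced. Hence, for $n\ge 1$, the words in $\B(n,r,0)$ having exactly $m$ runs correspond bijectively to the compositions $(\ell_1,\dots,\ell_m)$ of $n$ into $m$ positive parts with $\ell_i\ne r$ for each $i$. Writing $f(x):=\sum_{j\ge 1,\ j\ne r}x^j$ for the generating function of one admissible run length, the words with exactly $m\ge 1$ runs contribute $f(x)^m$; adding the term $1$ for the empty word gives
$$\sum_{n=0}^{\infty}N(n,r,0)\,x^n=\sum_{m=0}^{\infty}f(x)^m=\frac{1}{1-f(x)}.$$
This is a legitimate identity in $\mathbb{Z}[[x]]$ because $f(x)$ has zero constant term, so $1-f(x)$ is invertible and only finitely many $m$ contribute to the coefficient of each $x^n$.

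It then remains to simplify. Since $f(x)=\dfrac{x}{1-x}-x^r=\dfrac{x-x^r+x^{r+1}}{1-x}$, we get $1-f(x)=\dfrac{1-2x+x^r-x^{r+1}}{1-x}$, and therefore $\dfrac{1}{1-f(x)}=\dfrac{1-x}{1-2x+x^r-x^{r+1}}=W_r(x)$, which is precisely \eqref{eq1}. Comparing coefficients of $x^n$ yields $N(n,r,0)=W(n,r)$ for all $n\ge 0$; for $n<0$ both sides vanish by convention, so the identity holds for every integer $n$.

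The argument is short, so there is no serious obstacle, but two points need care. First, one should check that the run-length encoding really is a bijection, including the degenerate small cases — the empty word (counted by the $m=0$ term, matching $W(0,r)=1$) and $n=1$, where ``$0$'' lies in $\B(1,r,0)$ exactly when $r\ne 1$, matching $W(1,1)=0$ and $W(1,r)=1$ for $r\ge 2$. Second, one should be explicit that the geometric-series step is a formal manipulation in $\mathbb{Z}[[x]]$, not an analytic one. As a byproduct, multiplying through by the denominator gives $(1-2x+x^r-x^{r+1})\sum_n N(n,r,0)x^n=1-x$, which recovers the stated recursion $N(n,r,0)=2N(n-1,r,0)-N(n-r,r,0)+N(n-r-1,r,0)$ for $n\ge 2$.
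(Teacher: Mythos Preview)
Your proof is correct, but it proceeds by a genuinely different route from the paper's. The paper establishes the lemma by verifying that $N(n,r,0)$ satisfies the same initial values and the same recursion $N(n,r)=2N(n-1,r)-N(n-r,r)+N(n-r-1,r)$ as $W(n,r)$; the recursion is checked through four cases ($1<n<r$, $n=r$, $n=r+1$, $n>r+1$), the last of which gives a direct combinatorial interpretation by analyzing how appending a single symbol to a word in $\B(n-1,r,0)$ can create or destroy a terminal run of length $r$. You instead encode a word by the composition $(\ell_1,\dots,\ell_m)$ of its run lengths and compute the ordinary generating function in one stroke as $\sum_{m\ge 0}f(x)^m=1/(1-f(x))$ with $f(x)=x/(1-x)-x^r$, then simplify. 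Your approach is shorter, avoids the case analysis entirely, and in fact meshes seamlessly with the paper's own Section~4 argument for $k>0$, which is itself a decomposition argument --- indeed, your method extends immediately to arbitrary $k$ by allowing exactly $k$ of the parts to equal $r$, yielding the factor $x^{kr}W_r(x)^{k+1}$ directly. The paper's approach, on the other hand, makes the combinatorial meaning of each term in the recursion explicit and needs no generating-function manipulation beyond the definition of $W_r(x)$.
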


\begin{proof} To simplify notation, we set $N(n,r):=N(n,r,0)$.  We must show that $W(n,r) = N(n,r)$.  We can see that $N(n,r)$ satisfies the initial conditions for $W(n,r)$ by inspection of the data already given.  To verify the recursion formula, we consider cases:

Case 1: $1<n<r$.  In this case, no binary word of length $n$ contains a run of length $r$.  Therefore, $N(n,r) = 2^{n-1} = 2\,N(n-1,r)$, since we are counting words that begin with $0$.   Since $n<r$,  $W(n - r,r)=0=W(n - r - 1,r)$. Thus $N(n,r)$ satisfies the recursive rule of $W(n,r)$ for $n=2,3,\ldots r-1$. 

Case 2: $1<n=r$.  We must show that $N(r,r) = 2\,N(r-1,r) -1$.  But there is only one binary word beginning with $0$ and having length $r$ that is not in $\B(r,r,0)$, namely the word of $r$ $0$s.  So this case is clear.  In particular, $N(r,r) = 2^{r-1}-1$.  

Case 3: $n=r+1$.  We must show that $N(r+1,r) = 2\,N(r,r) - N(1,r) + 1$.  In the special case $r=1$, we need to show  $N(2,1) = 2\,N(1,1) - N(1,1) + 1 = 0+0+1$.  This is evident, since the only word of length 2 beginning with 0 and having no runs of length 1 is $00$.  If $r>1$, we need to show $N(r+1,r) = 2\,N(r,r)$.  We have already verified that $N(r,r) = 2^{r-1}-1$.  There are only two binary words beginning with 0 and having length $r+1$ that contain runs of length $r$, namely $01\cdots1$ and $0\cdots 01$, and hence $N(r+1,r) = 2^r-2$, which is what we sought to show.

Case 4: $n>r+1$. (This case contains the key idea in this note.)  We must show that $$N(n,r) = 2\,N(n - 1,r) - N(n - r,r) + N(n - r - 1,r).$$  There are $N(n - 1,r)$ binary words of length $n-1$ with no runs of length $r$.  We create $2\,N(n - 1,r)$ binary words by writing either a 0 or a 1 at the end of each of these, but in doing so, we may create a run of length $r$.  There are $N(n - r,r)$ words of length $n-1$ that end with a run of length $r-1$.  (These arise from the elements $b$ of $\B(n-r,r,0)$ by appending to $b$ a run of length $r-1$ symbols that are different from the last symbol in $b$.)  Each of these becomes a word containing a run of length $r$ in one way by the addition of a symbol at the end.  Thus, when we extend words from $\B(n-1,r,0)$ by appending a symbol, we create $2\,N(n - 1,r) - N(n - r, r)$ words of length $n$ with no runs of length $r$.  

Some words of length $n-1$ contain a run of length $r$ at the end, and no other runs of length $r$.  There are exactly $N(n - r - 1,r)$ of these, since we make each one by taking an $r$-run-free word of length $n - r - 1$ (i.e., and element of $\B(n-r-1,r, 0)$) and appending a run of $r$ symbols different from the last symbol of the taken word.  Upon the addition of one more copy of the same symbol, we create a word that is free of runs of length $r$, thus adding $N(n - r - 1,r)$ elements to the set we have formed.  

We have now described the only ways we can create or destroy a run of length $r$ by appending a symbol.  This shows that the recursive formula for $N(n,r)$ is valid and completes the proof of the $k=0$ case.   \end{proof}

\section{Proof of theorem for $k>0$}

If $w$ is a binary word, then $\op(w)$ denotes the word obtained from $w$ by writing $1$ in place of $0$ and $0$ in place of $1$.

Let $w\in \B(n,r,k)$.  Then $w$ has the structure:
 $$w=w_0r_1w_1r_2\cdots w_{k-1}r_kw_k,$$  
 where $r_i$ is a run of $0$s or of $1$s of length $r$ and  $w_i\in\B(n_i,r,0)$.  Note that $w_i$ may be empty (i.e., $n_i=0$).  Also note that $n_0+\cdots+n_k = n-kr.$ 

 Given a word structure as above, suppose that for each $i=0,\ldots, k$ we select a word $b_i$ in $\B(n_i,r,0)$.  Then (we claim), there is only one way to substitute runs of $0$s or $1$s for the $r_i$ and either $b_i$ or $\op(b_i)$ for $w_i$ ($i=0,\ldots,k$) in order to make a word in $\B(n,r,k)$.  We can see this inductively as follows.  If $w_0$ is empty, then $r_1$ must consist of $0$s.  Otherwise, in place of $w_0$, we write $b_0$.  Now, suppose we have completed filling in values for the $r_i$ and $w_i$ up to a given point.  The last symbol in the word formed thus far either belongs to a run, or to some run-free $w_i$.  If we are now to add a run, we must use symbols other than the last one appearing.  If we are to add a run-free segment (in place some non-empty $w_j$) then we must add either $b_j$ or $\op(b_j)$ and we have only one choice, lest we extend the last run, $r_j$.

The argument in the last paragraph shows that $$N(n,r,k) = \sum_{n_1,\cdots,n_k}\left\{\,\prod_{i=1}^kN(n_i,r)\,\Big|\, n_0+\cdots+n_k=n-rk\,\right\}.$$  But this is the coefficient of $x^{n-rk}$ in $(W_r(x))^{k+1}$, where $W_r(x)$ is the power series in \ref{eq1}.  The equation in the theorem follows immediately.

\section{Proof of the  Proposition}

Let $\B(n)$ denote the set of all binary words of length $n$ that begin with $0$.  We define a bijection
$\gamma:\B(n)\to \B(n)$ that codes runs of length $r$ as subwords  of the form $01\cdots1$ of length $r$.  Specifically, suppose $b = r_0r_1\cdots r_k$ is an element of $\B(n)$ written as a concatenation of runs $r_i$.  Necessarily, $r_0$ is a run of $0$s, $r_1$ is a run of $1$s and so on.   In general, the runs with even index consist of $0$s, while those of odd index consist of $1$s.  To compute $\gamma(b)$, we proceed as follows. In place of each $r_i$, write a word of the same length as $r_i$ consisting of a $0$ followed by $1$s.  The inverse of $\gamma$ is straightforward to construct.  Again, suppose  $b\in\B(n)$.  Then $b$ may be written in the form $s_0s_1\cdots s_k$, where each $s_i$ is a $0$ followed by zero or more consecutive $1$s.  We construct $\gamma^{-1}(b)$ as follows. In place of  $s_0$, write a string of $0$s of length equal to the length of $s_0$; in place of $s_1$ write a string of $1$s of length equal to the length of $s_1$, and so on.  In general, we write strings of $0$s in place of the $s_i$ when $i$ is even and strings of $1$s in place of $s_i$ when $i$ is odd, always writing a string of length equal to the one we are replacing.  This is obviously the inverse of $\gamma$, so we see that $\gamma$ is a bijection. Now, it is clearly the case that $\gamma(\B(n,r+1,k))$ consists of exactly those strings in $\B(n)$ that contain $k$ runs of $1$s of length $r$.  This proves the proposition.

\section{Computations}

The figure below shows the probability mass function  of $K^1_{240}$ to the right, in red.   To make the data more visible, we have filled in the  region bounded above by the polygonal path joining $N(240,1,k)/2^{239}$, for $k=30, 31,\ldots,100$.    We also show the PMF of  $K^2_{240}$ in the middle, in orange (for $k=10, 11,\ldots, 50$), and the PMF of $K^3_{240}$ to the left, in green (for  $k=10, 11,\ldots, 50$).  Using the function $\mathtt{SeriesCoefficient}$  in \textit{Mathematica} on a 2013 MacBook Pro, it takes about $0.75$ seconds to compute the 138 data points shown here.  It takes about 2 minutes to compute the whole list of 1001 numbers  $N(1000,1,k)/2^{999}$, $0\leq k\leq 1000$.

\bigskip

\begin{center}
\includegraphics[width=.66\textwidth]{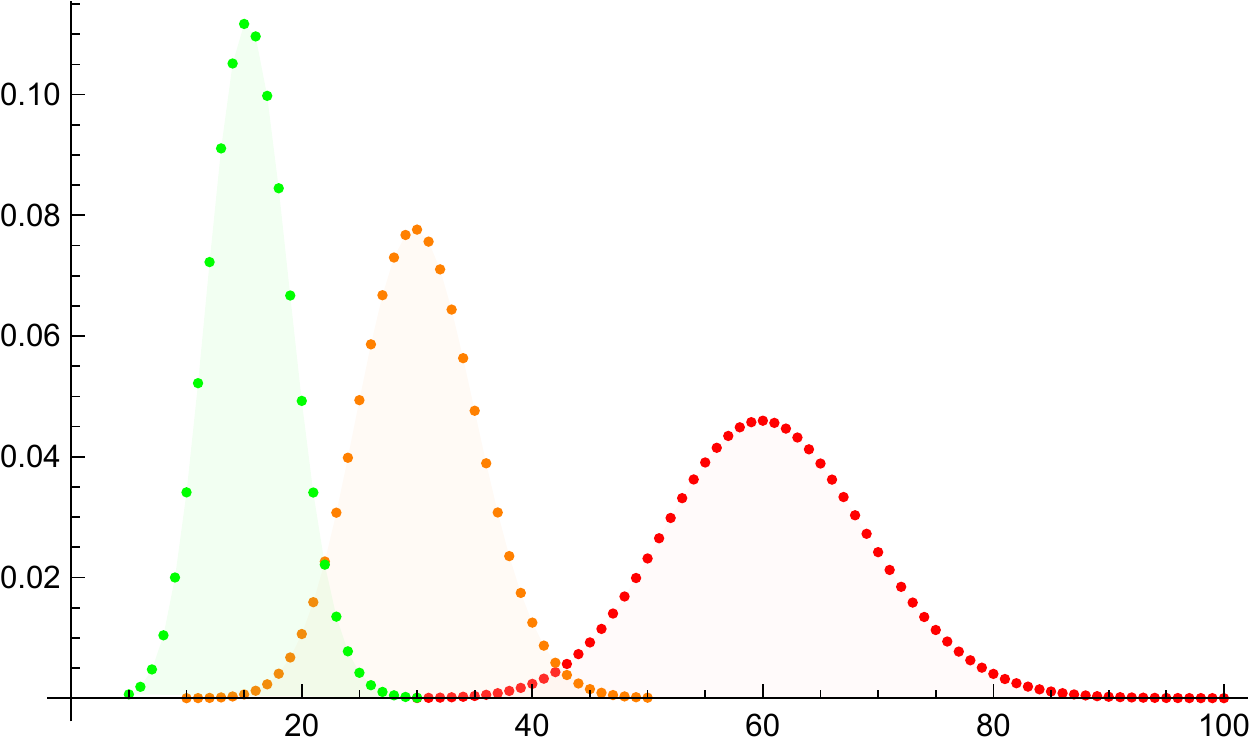}
\end{center}

\bigskip

James J. Madden

Department of Mathematics

222 Prescott Hall

Louisiana State University

Baton Rouge LA 70803-4918

madden@math.lsu.edu

jamesjmadden@gmail.com


\begin{thebibliography}{30}


\bibitem{feller}
William Feller.
\textit{An Introduction to Probability Theory and Its Applications. Third Edition.}
Wiley, New York, 1968.

\bibitem{mus}
Marco Muselli.
Simple expressions for success run distributions in Bernoulli trials.
\textit{Statistics \& Probability Letters} 31 (1996), 121-128.

\bibitem{sinha}
Koushik Sinha and Bhabani P. Sinha.
On the distribution of runs of ones in binary strings.
\textit{Computers and Mathematics with Applications} 58 (2009), 1816-1829.


\end{thebibliography}
\end{document}